\newcommand{\N}{\mathbb{N}}
\newcommand{\Z}{\mathbb{Z}}
\newcommand{\C}{\mathbb{C}}
\theoremstyle{definition} 
\theoremstyle{remark} 
\theoremstyle{plain} 
\theoremstyle{plain} 
\theoremstyle{remark} 
\theoremstyle{plain} 
\theoremstyle{plain} 
\theoremstyle{plain} 
\theoremstyle{remark} 
	\title[$C^*$-algebras of non-amenable groupoids]{A non-amenable groupoid whose maximal and reduced $C^*$-algebras are the same}
		\author{Rufus Willett}
		\address{Rufus Willett\\
		University of \Hawaii\ at \Manoa, Department of Mathematics, 2565
  McCarthy Mall, Honolulu, HI 96822-2273, USA}
                \email{rufus@math.hawaii.edu}
      		\urladdr{http://math.hawaii.edu/~rufus/}
\newcommand{\Manoa}{M\=anoa}
\newcommand{\Hawaii}{Hawai\kern.05em`\kern.05em\relax i}
\newcommand{\red}{\text{red}}
\theoremstyle{plain}
\newtheorem{theorem}{Theorem}[section]
\newtheorem{lemma}[theorem]{Lemma}
\newtheorem{definition-theorem}[theorem]{Definition / Theorem}
\newtheorem*{conjecture*}{Conjecture}
\newtheorem*{theorem*}{Theorem}
\theoremstyle{definition}
\newtheorem{definition}[theorem]{Definition}
\theoremstyle{remark}
\newtheorem{remark}[theorem]{Remark}
\newtheorem{remarks}[theorem]{Remarks}
\newtheorem*{example*}{Example}  
\newtheorem*{remark*}{Remark}
\begin{document}

\maketitle

\begin{abstract}
We construct a locally compact groupoid with the properties in the title.  Our example is based closely on constructions used by Higson, Lafforgue, and Skandalis in their work on counterexamples to the Baum-Connes conjecture.  It is a bundle of countable groups over the one point compactification of the natural numbers, and is Hausdorff, second countable and \'{e}tale.
\end{abstract}

\section{Introduction}

Say $G$ is a locally compact group equipped with a Haar measure, and $C_c(G)$ is the convolution $*$-algebra of continuous, compactly supported, complex-valued functions on $G$.  In general $C_c(G)$ has many interesting $C^*$-algebra completions, but the two most important are: the maximal completion $C^*_{\max}(G)$, which is the completion taken over (the integrated forms of) all unitary representations of $G$; and the reduced completion $C^*_{\red}(G)$, which is the completion of $C_c(G)$ for the left regular representation on $L^2(G)$.   An important theorem of Hulanicki \cite{Hulanicki:1967lt} says that $C^*_{\max}(G)=C^*_\red(G)$ if and only if $G$ is amenable.

Now, say $G$ is a locally compact, Hausdorff groupoid.  To avoid measure-theoretic complications in the statements below, we will assume that $G$ is \'{e}tale.  Much as in the case of groups, there is a canonically associated convolution $*$-algebra $C_c(G)$ of continuous compactly supported functions on $G$, and this $*$-algebra completes in a natural way to maximal and reduced $C^*$-algebras $C^*_{\max}(G)$ and $C^*_{\text{red}}(G)$.  There is also a natural definition of topological amenability due to Renault \cite[discussion below Definition II.3.6]{Renault:1980fk}  that generalises the definition for groups.   See \cite{Renault:1980fk} and \cite{Anantharaman-Delaroche:2000mw} for comprehensive treatments of groupoids and their $C^*$-algebras, and \cite[Section 2.3]{Renault:2009zr} and \cite[Section 5.6]{Brown:2008qy} for self-contained introductions covering only the (much simpler) \'{e}tale case.

For groupoids, one has the following analogue of one direction of Hulanicki's theorem, for which we refer to book of Anantharaman-Delaroche and Renault \cite[Proposition 3.3.5 and Proposition 6.1.8]{Anantharaman-Delaroche:2000mw}\footnote{Anantharaman-Delaroche and Renault actually prove this result in the much more sophisticated case that $G$ is not \'{e}tale, and thus need additional assumptions about Haar systems.  See \cite[Corollary 5.6.17]{Brown:2008qy} for a self-contained proof of Theorem \ref{gpdhul} as stated here.}.

\begin{theorem}\label{gpdhul}
Say $G$ is a locally compact, Hausdorff, \'{e}tale groupoid.  If $G$ is topologically amenable, then $C^*_{max}(G)=C^*_{\red}(G)$.
\end{theorem}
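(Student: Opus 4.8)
The plan is to prove the one nontrivial inequality between the two norms. Since the left regular representation factors through the universal representation, there is always a canonical surjection $C^*_{\max}(G)\to C^*_{\red}(G)$ and $\|f\|_{\red}\le\|f\|_{\max}$ for all $f\in C_c(G)$; so it suffices to prove $\|f\|_{\max}\le\|f\|_{\red}$. Reformulated representation-theoretically, I want to show that every $*$-representation $\pi$ of $C_c(G)$ is \emph{weakly contained} in the regular representation $\lambda$, meaning $\|\pi(f)\|\le\|\lambda(f)\|=\|f\|_{\red}$ for every $f\in C_c(G)$; applying this to the universal representation then gives the theorem.

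I would assemble the argument from two standard ingredients. The first is a workable analytic form of topological amenability in the \'{e}tale case: $G$ is topologically amenable precisely when there is a net $(\xi_i)$ of continuous, compactly supported functions on $G$ with $x\mapsto\sum_{\gamma\in G^x}|\xi_i(\gamma)|^2\to 1$ uniformly on $G^{(0)}$ and $\gamma\mapsto\sum_{\alpha}|\xi_i(\gamma\alpha)-\xi_i(\alpha)|^2\to 0$ uniformly on compact subsets of $G$. Concretely, the $\xi_i$ are approximately-invariant unit sections of the field $x\mapsto\ell^2(G^x)$ carrying $\lambda$, so amenability says exactly that the trivial representation $1_G$ (on $C_0(G^{(0)})$) is weakly contained in $\lambda$. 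The second ingredient is Fell's absorption principle in the groupoid setting: for any representation $\pi$ on a Hilbert $C_0(G^{(0)})$-module $\h$, the representation $\pi\otimes\lambda$ on the internal tensor product of $\h$ with the field $x\mapsto\ell^2(G^x)$ is unitarily equivalent to an amplification of $\lambda$, so that $\|(\pi\otimes\lambda)(f)\|=\|f\|_{\red}$ for all $f$. I would prove this by writing down the intertwining unitary determined fiberwise by the translation $\delta_\gamma\otimes\eta\mapsto\delta_\gamma\otimes\pi(\gamma)\eta$, as in the group case, and checking that it conjugates $\pi\otimes\lambda$ to $1\otimes\lambda$.

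Given these, the proof closes with a short formal step followed by one estimate. Weak containment $1_G\prec\lambda$ tensored with the fixed representation $\pi$ yields $\pi\cong\pi\otimes 1_G\prec\pi\otimes\lambda$, and Fell absorption identifies $\pi\otimes\lambda$ with an amplification of $\lambda$; composing, $\|\pi(f)\|\le\|(\pi\otimes\lambda)(f)\|=\|f\|_{\red}$. Unwinding, the containment $\pi\otimes 1_G\prec\pi\otimes\lambda$ is the concrete statement that for $f\in C_c(G)$ and a vector $\eta$ for $\pi$ one can approximate the coefficient $\langle\pi(f)\eta,\eta\rangle$ by $\langle(\pi\otimes\lambda)(f)(\eta\otimes\xi_i),\eta\otimes\xi_i\rangle$: the normalization of $(\xi_i)$ makes $\eta\otimes\xi_i$ asymptotically a unit vector, and the asymptotic invariance makes the two coefficients asymptotically equal.

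The step I expect to be the main obstacle is this last comparison of matrix coefficients. Turning the asymptotic invariance of $(\xi_i)$ into a genuine bound requires expanding the convolution products defining $\pi(f)$ and $(\pi\otimes\lambda)(f)$ fiber by fiber and controlling the cross terms uniformly in the base point; it is precisely the \'{e}tale hypothesis that keeps these as locally finite sums rather than Haar-system integrals and lets a Cauchy--Schwarz estimate close. The other point requiring genuine care, rather than routine bookkeeping, is formulating and verifying Fell's absorption principle over the $C_0(G^{(0)})$-module $L^2(G)$ --- as opposed to over a single Hilbert space --- since the tensor products, amplifications, and the intertwining unitary must all be interpreted in the Hilbert-module category.
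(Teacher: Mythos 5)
The paper itself does not prove Theorem \ref{gpdhul}; it quotes the result from the literature (Anantharaman-Delaroche--Renault, Propositions 3.3.5 and 6.1.8, with a self-contained \'{e}tale proof in Brown--Ozawa, Corollary 5.6.17). So the relevant comparison is with those proofs. Your strategy --- approximately invariant $\ell^2$-sections witnessing topological amenability, a groupoid Fell absorption principle, and the weak containment chain $\pi\cong\pi\otimes 1_G\prec\pi\otimes\lambda$ --- is the classical Hulanicki-style argument, and it is essentially the route taken by Anantharaman-Delaroche and Renault, so the overall plan is a legitimate one.

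There is, however, a genuine gap, and it sits before the two difficulties you flag at the end. The maximal norm is a supremum over $*$-homomorphisms $\pi:C_c(G)\to\mathcal{B}(H)$ on Hilbert spaces. Such a $\pi$ is just an algebra representation: the symbols $\pi(\gamma)$ for $\gamma\in G$, the fiberwise intertwiner $\delta_\gamma\otimes\eta\mapsto\delta_\gamma\otimes\pi(\gamma)\eta$, and indeed the very object $\pi\otimes\lambda$ are all undefined until you know that $\pi$ is the integrated form of a representation of the groupoid $G$ itself, i.e.\ comes with a measure on $G^{(0)}$, a measurable field of Hilbert spaces over $G^{(0)}$, and unitaries $H_{s(\gamma)}\to H_{r(\gamma)}$. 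That is Renault's disintegration theorem, a substantial result which your proposal silently assumes in the phrase ``for any representation $\pi$ on a Hilbert $C_0(G^{(0)})$-module'': the passage from an arbitrary Hilbert-space representation of the convolution algebra to such a structured representation is precisely the hard part, not the Cauchy--Schwarz estimate at the end. (In the group case this issue is invisible, since a representation of $C_c(\Gamma)$ manifestly restricts to unitaries $\pi(\gamma)$; for groupoids it is the crux.) This is also why the self-contained proof in Brown--Ozawa that the paper points to takes a different route that never disintegrates anything: amenability produces compactly supported positive-type functions $h_i\to 1$ uniformly on compacta, pointwise multiplication by $h_i$ defines ccp maps on $C^*_{\max}(G)$ which factor through $C^*_{\red}(G)$, and these maps converge pointwise to the identity, forcing the canonical quotient $C^*_{\max}(G)\to C^*_{\red}(G)$ to be injective. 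To repair your argument, either invoke the disintegration theorem explicitly (after which your outline does go through), or switch to the multiplier argument, which stays entirely at the level of the algebra $C_c(G)$.
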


In this note, we show that the converse to this result is false, so Hulanicki's theorem does not extend to groupoids.

\begin{theorem}\label{main}
There exist locally compact, Hausdorff, second countable, \'{e}tale groupoids with compact unit space that are not topologically amenable, but that satisfy $C^*_{max}(G)=C^*_\red(G)$.
\end{theorem}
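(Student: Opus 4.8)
The plan is to realise $G$ as a bundle of groups over the one-point compactification $\overline{\N} = \N \cup \{\infty\}$, in the style of Higson--Lafforgue--Skandalis. First I would fix a countable, residually finite, \emph{non-amenable} group $\Gamma$ together with a decreasing sequence $N_1 \supseteq N_2 \supseteq \cdots$ of finite-index normal subgroups with $\bigcap_n N_n = \{e\}$. The groupoid is $G = \big(\bigsqcup_{n \in \N} \Gamma/N_n\big) \sqcup \Gamma$, fibred over $\overline{\N}$, with range and source both equal to the bundle projection, so that $G$ is a bundle of groups. The topology is the one for which each section $s_\gamma$ defined by $s_\gamma(n) = \gamma N_n$ and $s_\gamma(\infty) = \gamma$ is continuous and open; these form a basis of compact open bisections. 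I would then dispatch the routine point-set claims: $G$ is Hausdorff (using $\bigcap_n N_n = \{e\}$ to separate distinct sections near $\infty$), second countable, and \'etale with compact unit space $\overline{\N}$.

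Non-amenability is the easy half. Since $G$ is a bundle of groups, the isotropy group at the unit $\infty$ is the entire fibre $\Gamma$. Topological amenability of an \'etale groupoid passes to all isotropy groups, so if $G$ were topologically amenable then $\Gamma$ would be amenable; choosing $\Gamma$ non-amenable rules this out.

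The substance is the equality $C^*_{\max}(G) = C^*_{\red}(G)$. I would exploit the open invariant subset $\N \subseteq \overline{\N}$ with closed complement $\{\infty\}$, which for both completions yields a short exact sequence
\[
0 \longrightarrow I_\ast \longrightarrow C^*_\ast(G) \longrightarrow Q_\ast \longrightarrow 0, \qquad \ast \in \{\max,\red\},
\]
with $I_\ast$ generated by functions supported over $\N$. Over the discrete space $\N$ the fibres $\Gamma/N_n$ are finite, hence amenable, so $I_{\max} = I_{\red} = c_0\text{-}\bigoplus_n C^*(\Gamma/N_n)$ and the left vertical map is an isomorphism. By the five lemma applied to the canonical morphism of extensions, it then suffices to prove that $Q_{\max} \to Q_{\red}$ is an isomorphism. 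Maximality makes the top sequence exact with $Q_{\max} = C^*_{\max}(\Gamma)$. For the reduced quotient I would compute the quotient norm on $\mathbb{C}[\Gamma]$ directly: subtracting elements of $I_{\red}$ annihilates any finite set of fibres, so the norm of $a$ is $\max\big(\limsup_n \|\lambda_{\Gamma/N_n}(a)\|,\ \|\lambda_\Gamma(a)\|\big)$; and since $\langle \lambda_{\Gamma/N_n}(\gamma)\delta_{\bar e}, \delta_{\bar e}\rangle = \mathbf 1_{N_n}(\gamma) \to \mathbf 1_{\{e\}}(\gamma)$ by $\bigcap_n N_n = \{e\}$, one has $\lambda_\Gamma \prec \bigoplus_{n \ge N}\lambda_{\Gamma/N_n}$ for every $N$. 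The regular term is therefore redundant and $Q_{\red}$ is the completion of $\mathbb{C}[\Gamma]$ in $\|a\|_\sigma := \limsup_n \|\lambda_{\Gamma/N_n}(a)\|$.

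The theorem thus reduces to a single statement about $\Gamma$ alone, and this is where the hard part will lie. I must choose $\Gamma$ and $\{N_n\}$ so that
\[
\limsup_{n} \|\lambda_{\Gamma/N_n}(a)\| = \|a\|_{C^*_{\max}(\Gamma)} \qquad (a \in \mathbb{C}[\Gamma]),
\]
equivalently so that every unitary representation of $\Gamma$ is weakly contained in the tail $\bigoplus_{n \ge N}\lambda_{\Gamma/N_n}$ for all $N$. One inequality is free; the real content is that the regular representations of the finite quotients recover the full maximal norm of the \emph{non-amenable} group $\Gamma$ --- a property that fails for most residually finite groups and must be engineered. This is precisely the HLS input: I would take $\Gamma$ to have Kazhdan's property (T), whose uniform spectral gap across the family $\{\lambda_{\Gamma/N_n}\}$ is the rigidity I intend to leverage to force the finite-quotient representations to accumulate onto, and hence norm-exhaust, the maximal completion. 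Establishing this weak-containment equality for a concrete such $\Gamma$ is the main obstacle; once it is in hand, $Q_{\max} \to Q_{\red}$ is an isometric isomorphism and the five lemma upgrades this to $C^*_{\max}(G) = C^*_{\red}(G)$, completing the example.
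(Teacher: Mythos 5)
Your reduction of the theorem is essentially the paper's own: you build the HLS groupoid over $\N\cup\{\infty\}$, observe that the ideal over $\N$ is a direct sum of (finite-dimensional, hence unique-norm) group algebras so that the left vertical map is an isomorphism, apply the five lemma, identify the maximal quotient with $C^*_{\max}(\Gamma)$, and compute that the reduced quotient norm on $\C[\Gamma]$ is $\limsup_n\|\lambda_{\Gamma/N_n}(\cdot)\|$ (your remark that $\lambda_\Gamma$ is weakly contained in every tail $\bigoplus_{n\ge N}\lambda_{\Gamma/N_n}$ is correct and mirrors the paper's use of nestedness). The non-amenability half via the isotropy group at $\infty$ is also fine. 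So the theorem correctly reduces to producing a non-amenable residually finite $(\Gamma,(N_n))$ with $\|a\|_{\max}=\sup_n\|\lambda_{\Gamma/N_n}(a)\|$ on $\C[\Gamma]$.

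The gap is in the last step, and it is not a technicality: Kazhdan's property (T) is precisely the \emph{wrong} ingredient, indeed the opposite of what is needed. If your norm equality held, then $C^*_{\max}(\Gamma)$ would be residually finite-dimensional, since each $\lambda_{\Gamma/N_n}$ is finite-dimensional; by Bekka's results this \emph{fails} for $SL(3,\Z)$ and the other standard residually finite property (T) groups (higher-rank arithmetic lattices), and no infinite (T) group is known to satisfy it. The ``uniform spectral gap'' you invoke works against you: property (T) (or even property $(\tau)$ relative to $(N_n)$) isolates the trivial representation from the nontrivial parts of the $\lambda_{\Gamma/N_n}$, producing a Kazhdan-type projection, and this is exactly the mechanism Higson--Lafforgue--Skandalis exploited to make the maximal and reduced algebras \emph{differ} and thereby obtain Baum--Connes counterexamples --- the opposite of your goal. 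What the construction actually requires is an anti-rigidity property: Lubotzky--Shalom's property FD, i.e.\ density of the finite representations in the unitary dual. This holds for the free group $F_2$ (which emphatically does not have (T)), and even then the choice of sequence matters: one must take $K_n$ to be the intersection of the kernels of all homomorphisms from $F_2$ to groups of order at most $n$, so that every finite quotient eventually appears; if instead one takes a congruence-type sequence in $F_2\le SL(2,\Z)$, Selberg's theorem gives property $(\tau)$ and the equality fails, since the trivial representation is then isolated in the closure of the finite-quotient representations but not in the full dual of $F_2$. So your plan as stated cannot be carried out: replace ``property (T)'' by ``property FD'' and choose the cofinal sequence of finite-index subgroups, and the rest of your argument goes through.
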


\begin{remark}\label{mw amen}
Briefly looking outside the \'{e}tale case, there are (at least)\footnote{Another definition is that of \emph{Borel amenability} from \cite[Definition 2.1]{Renault:2013lp}.  The condition `$C^*_{\max}(G)=C^*_\red(G)$' could also be thought of as a form of amenability, and is sometimes (see e.g.\ \cite{Sims:2013ov}) called \emph{metric amenability}.} 
two notions of amenability that are used for locally compact groupoids: \emph{topological amenability} \cite[Definition 2.2.8]{Anantharaman-Delaroche:2000mw} and \emph{measurewise amenability} \cite[Definition 3.3.1]{Anantharaman-Delaroche:2000mw}.  For a general locally compact, second countable, Hausdorff groupoid $G$ with continuous Haar system, one has the following implications
\begin{align*}
\text{$G$ is topologically amenable} & ~~\Rightarrow~~\text{$G$ is measurewise amenable } \\& ~~ \Rightarrow~~  C^*_{\max}(G)=C^*_\red(G)~;
\end{align*}
for the first implication, see \cite[Proposition 3.3.5]{Anantharaman-Delaroche:2000mw}, and for the second see \cite[Proposition 6.1.8]{Anantharaman-Delaroche:2000mw}.  If $G$ is a group (more generally, a transitive groupoid \cite[Lemma 8]{Sims:2013ov}), all three of these conditions are equivalent.

If $G$ is \'{e}tale, then it is automatically equipped with a continuous Haar system, and topological and measurewise amenability are equivalent \cite[Corollary 3.3.8]{Anantharaman-Delaroche:2000mw}.  Thus our examples show that measurewise amenability does not imply equality of the maximal and reduced $C^*$-algebras of a groupoid.  They do not, however, seem to have any bearing on the general relationship between topological and measurewise amenability.
\end{remark}

The examples used in Theorem \ref{main} are a slight adaptation of counterexamples to the Baum-Connes conjecture for groupoids constructed by Higson, Lafforgue, and Skandalis \cite[Section 2]{Higson:2002la}.  The essential extra ingredient needed is \emph{property FD} of Lubotzky and Shalom \cite{Lubotzky:2004xw}.  Our examples are of a particularly simple form, and are in fact a bundle of groups: see Section \ref{results sec} below for details of all this.  

The existence of examples as in Theorem \ref{main} is a fairly well-known question (see for example \cite[Remark 6.1.9]{Anantharaman-Delaroche:2000mw}, \cite[Section 4.2]{Renault:2009zr}, or \cite[page 162]{Brown:2008qy}) and the answer did not seem to be known to experts.  We thus thought Theorem \ref{main} was worth publicizing despite the similarity to existing constructions.  We should remark, however, that our results seem to have no bearing on the existence of transformation groupoids, or of principal groupoids, with the properties in Theorem \ref{main}.  In particular, we cannot say anything about whether or not equality of maximal and reduced crossed products for a group action implies (topological) amenability of the action.  See Section \ref{comment sec} for some comments along these lines.

This paper is structured as follows. Section \ref{results sec} recalls the construction of Higson, Lafforgue and Skandalis, and proves Theorem \ref{main}.  The proof proceeds by characterizing when the class of examples studied by Higson, Lafforgue and Skandalis are amenable, and when their maximal and reduced groupoid $C^*$-algebras are the same; we then use property FD of Lubotzky and Shalom to show that examples `between' these two properties exist.  Section \ref{comment sec} collects some comments and questions about exactness, transformation groupoids, and coarse groupoids.

\subsection*{Acknowledgements}

The author was partially supported by the US NSF.  The author is grateful to the Erwin Schr\"{o}dinger Institute in Vienna for its support during part of the work on this paper, to Erik Guentner and J\'{a}n \v{S}pakula for interesting conversations on related issues, and to Claire Anantharaman-Delaroche and Dana Williams for very useful comments on an earlier version of this note.

\section{Main result}\label{results sec}

We first recall a construction of a class of groupoids from \cite[Section 2]{Higson:2002la}; some groupoids from this class will have the properties in Theorem \ref{main}.  The starting point for this construction is the following data. 

\begin{definition}\label{approx}
Let $\Gamma$ be a discrete group.  An \emph{approximating sequence} for $\Gamma$ is a sequence $(K_n)$ of subgroups of $\Gamma$ with the following properties.
\begin{enumerate}[(i)]
\item Each $K_n$ is a normal, finite index subgroup of $\Gamma$.
\item The sequence is nested: $K_n\supseteq K_{n+1}$ for all $n$.
\item The intersection of the sequence is trivial: $\bigcap_nK_n=\{e\}$.
\end{enumerate}
An \emph{approximated group} is a pair $(\Gamma,(K_n))$ consisting of a discrete group together with a fixed approximating sequence.
\end{definition}

Here then is the construction of Higson, Lafforgue, and Skandalis that we will use.

\begin{definition}\label{hlsgpd}
Let $(\Gamma,(K_n))$ be an approximated group, and for each $n$, let $\Gamma_n:=\Gamma/K_n$ be the associated quotient group, and $\pi_n:\Gamma\to \Gamma_n$ the quotient map.  For convenience, we also write $\Gamma=\Gamma_\infty$ and $\pi_\infty:\Gamma\to\Gamma$ for the identity map.  As a set, define 
$$
G:=\bigsqcup_{n\in \N\cup\{\infty\}} \{n\}\times \Gamma_n.
$$  
Put the topology on $G$ that is generated by the following open sets.
\begin{enumerate}[(i)]
\item For each $n\in \N$ and $g\in \Gamma_n$, the singleton $\{(n,g)\}$ is open. 
\item For each fixed $g\in \Gamma$ and $N\in \N$, the set $\{(n,\pi_n(g))~|~n\in \N\cup\{\infty\}, n>N\}$ is open.
\end{enumerate}
Finally equip $G$ with the groupoid operations coming from the group structure on the `fibres' over each $n$: precisely, the unit space is identified with the subspace $\{(n,g)\in G~|~g=e\}$; the source and range maps are defined by $r(n,g)=s(n,g)=(n,e)$; and composition and inverse are defined by $(n,g)(n,h):=(h,gh)$ and $(n,g)^{-1}:=(n,g^{-1})$.

It is not difficult to check that $G$ as constructed above is a locally compact, Hausdorff, second countable, \'{e}tale groupoid.  Moreover the unit space $G^{(0)}$ of $G$ naturally identifies with the one-point compactification of $\N$.  We call $G$ the \emph{HLS groupoid} associated to the approximated group $(\Gamma,(K_n))$.
\end{definition}

In the rest of this section, we will characterise precisely when HLS groupoids as above are amenable, and when their maximal and reduced groupoid $C^*$-algebras coincide.  We will then use results of Lubotzky and Shalom to show that examples that are `between' these two properties exist, thus completing the proof of Theorem \ref{main}.

Out first lemma characterises when an HLS groupoid $G$ is amenable.  We recall definitions of amenability that are appropriate for our purposes: for the groupoid definition, compare \cite[Definition 5.6.13]{Brown:2008qy} and \cite[Proposition 2.2.13 (ii)]{Anantharaman-Delaroche:2000mw}; for the group definition, see for example \cite[Definition 2.6.3 and Theorem 2.6.8]{Brown:2008qy}.

\begin{definition}\label{amendef}
Let $G$ be a locally compact, Hausdorff, \'{e}tale groupoid.  $G$ is \emph{amenable} if for any compact subset $K$ of $G$ and $\epsilon>0$ there exists a continuous, compactly supported function $\eta:G\to [0,1]$ such that for all $g\in K$
$$
\Big|\sum_{h\in G:s(h)=r(g)}\eta(h)-1\Big|<\epsilon ~~\text{ and }~~\sum_{h\in G:s(h)=r(g)}|\eta(h)-\eta(hg)|<\epsilon.
$$

Let $\Gamma$ be a discrete group.  $\Gamma$ is \emph{amenable} if for any finite subset $F$ of $G$ and $\delta>0$ there exists a finitely supported function $\xi:\Gamma\to [0,1]$ such that for all $g\in F$
$$
\sum_{h\in \Gamma}|\xi(hg)-\xi(h)|<\delta.
$$
\end{definition}

\begin{lemma}\label{amen}
Let $G$ be the HLS groupoid associated to an approximated group $(\Gamma,(K_n))$.  Then $G$ is (topologically) amenable if and only if $\Gamma$ is amenable.
\end{lemma}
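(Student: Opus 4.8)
The plan is to reduce both directions to a fibrewise analysis, using that the fibre of $G$ over a unit $(n,e)$ is exactly a copy of the group $\Gamma_n$. Concretely, for $g=(n,g_0)$ we have $r(g)=s(g)=(n,e)$, so the index set $\{h:s(h)=r(g)\}$ in Definition \ref{amendef} is precisely the fibre $\{n\}\times\Gamma_n$; writing $h=(n,h_0)$, the two groupoid conditions become
\[
\Big|\sum_{h_0\in\Gamma_n}\eta(n,h_0)-1\Big|<\epsilon\quad\text{and}\quad\sum_{h_0\in\Gamma_n}|\eta(n,h_0)-\eta(n,h_0g_0)|<\epsilon,
\]
which are exactly the Følner-type conditions for the group $\Gamma_n$. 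I would also record two facts about the topology of $G$: the fibre $\{\infty\}\times\Gamma$ is discrete in the subspace topology (each basic neighbourhood of $(\infty,g)$ meets it only in $(\infty,g)$), so any compact $K$ meets it in a finite set $\{(\infty,g):g\in F\}$ with $F\subseteq\Gamma$ finite; and a sequential-compactness argument shows there is $N$ with $K\cap(\{n\}\times\Gamma_n)\subseteq\{(n,\pi_n(g)):g\in F\}$ for every $n>N$.

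For the forward implication ($G$ amenable $\Rightarrow\Gamma$ amenable), given finite $F\subseteq\Gamma$ and $\delta>0$, I would apply groupoid amenability to the finite (hence compact) set $K=\{(\infty,g):g\in F\}$ with $\epsilon=\delta$, obtaining $\eta$, and set $\xi:=\eta(\infty,\cdot):\Gamma\to[0,1]$. Compact support of $\eta$ together with discreteness of the $\infty$-fibre forces $\xi$ to be finitely supported, and the fibrewise form of the second condition is precisely $\sum_{h}|\xi(h)-\xi(hg)|<\delta$ for $g\in F$. Thus $\Gamma$ is amenable. This direction uses no property of the approximating sequence, reflecting that $\Gamma$ sits in $G$ as the fibre over $\infty$.

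The converse ($\Gamma$ amenable $\Rightarrow G$ amenable) is where the work lies. Given compact $K$ with associated $F$ and $N$ as above, and $\epsilon>0$, I would choose a finitely supported probability measure $\xi$ on $\Gamma$, supported on a finite set $S$, with $\sum_h|\xi(hg)-\xi(h)|<\epsilon$ for all $g\in F$, and then define $\eta$ by pushing $\xi$ forward along each quotient map, $\eta(n,\bar g):=\sum_{\pi_n(g)=\bar g}\xi(g)$ and $\eta(\infty,g):=\xi(g)$. The crucial use of condition (iii), $\bigcap_nK_n=\{e\}$, enters here: since $S$ and $F$ are finite and $(K_n)$ is nested, the $K_n$ eventually avoid every nontrivial element of a suitable finite set, so there is $N'$ such that $\pi_n$ is injective on a finite set $T\supseteq S$ (depending on $S$ and $F$) for all $n\ge N'$. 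For such $n$ the pushforward is an honest transport of $\xi$: it takes values in $[0,1]$, and the fibrewise Følner sums for $g_0=\pi_n(g)$ coincide term by term with the group sums for $\xi$, hence are $<\epsilon$. The same injectivity gives $\eta(n,\pi_n(g))=\xi(g)=\eta(\infty,g)$ for all $n\ge N'$, which yields continuity at every point of the $\infty$-fibre (continuity elsewhere is automatic, the finite fibres being open), and shows $\mathrm{supp}(\eta)$ lies in the compact set $\bigcup_{s\in S}\{(n,\pi_n(s)):n\in\N\cup\{\infty\}\}$, so $\eta$ is compactly supported.

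It remains to deal with the finitely many fibres $n\le\max(N,N')$ that $K$ meets, where the pushforward can exceed $1$; on each such fibre I would simply redefine $\eta(n,\cdot)$ to be the uniform distribution $1/|\Gamma_n|$ on the finite group $\Gamma_n$, which satisfies both conditions exactly, and since these fibres consist of isolated points this alters neither continuity at infinity nor compact support. The main obstacle is precisely this converse direction, and within it the need to satisfy the Følner conditions on infinitely many quotient fibres \emph{simultaneously} while keeping $\eta$ continuous at the $\infty$-fibre; both are achieved only through the eventual injectivity of the quotient maps on finite sets, i.e.\ through condition (iii).
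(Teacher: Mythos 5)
Your overall strategy coincides with the paper's: restrict $\eta$ to the fibre at infinity for the forward direction, and push $\xi$ forward along the quotient maps $\pi_n$ for the converse. Two points in the execution need attention, one of which is a real gap.

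First, the forward direction. The group half of Definition \ref{amendef} is vacuous unless one reads in the requirement $\sum_h\xi(h)=1$ (otherwise $\xi\equiv 0$ verifies it), and you yourself adopt this reading in the converse, where you take $\xi$ to be a finitely supported \emph{probability} measure --- indeed your converse needs this, since the total mass of the pushforward on each fibre equals the total mass of $\xi$ and must lie within $\epsilon$ of $1$. But your forward direction, with $\epsilon=\delta$ and $\xi:=\eta(\infty,\cdot)$, produces a function whose total mass is only within $\epsilon$ of $1$; it is not a probability measure, so it does not verify the definition in the form you use elsewhere. This is why the paper chooses $\epsilon<\delta/(1+\delta)$, sets $M=\sum_g\eta(\infty,g)\geq 1-\epsilon$, and normalizes $\xi:=\eta(\infty,\cdot)/M$, giving $\sum_h|\xi(hg)-\xi(h)|<\epsilon/(1-\epsilon)<\delta$. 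The fix is trivial, but as written the two halves of your proof use inconsistent definitions.

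Second, the converse. Your detour through eventual injectivity of $\pi_n$ on a finite set $T$, followed by patching the small fibres with uniform distributions, is not wrong (the patched $\eta$ does satisfy everything), but it rests on a false premise: the pushforward of a probability measure can never exceed $1$, since $\eta(n,\bar g)=\sum_{h\in\pi_n^{-1}(\bar g)}\xi(h)$ is a sum of a sub-collection of the masses of $\xi$ and hence lies in $[0,1]$ for \emph{every} $n$. Likewise the F\o lner estimate needs no injectivity: as $\pi_n$ is a homomorphism, right translation by $g$ is a bijection from $\pi_n^{-1}(\bar h)$ onto $\pi_n^{-1}(\bar h\,\pi_n(g))$, so
\begin{equation*}
\sum_{\bar h\in\Gamma_n}\big|\eta(n,\bar h)-\eta(n,\bar h\,\pi_n(g))\big|\;\leq\;\sum_{\bar h\in\Gamma_n}\,\sum_{h\in\pi_n^{-1}(\bar h)}|\xi(h)-\xi(hg)|\;=\;\sum_{h\in\Gamma}|\xi(h)-\xi(hg)|\;<\;\epsilon,
\end{equation*}
and the total mass of $\eta(n,\cdot)$ is exactly $1$ on every fibre. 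This is the paper's proof: the single formula $\eta(n,g)=\sum_{h\in\pi_n^{-1}(g)}\xi(h)$ works on all fibres simultaneously, and condition (iii) together with nestedness enters only where you correctly locate it, namely for continuity at the $\infty$-fibre and compact support. (There, note that your injectivity statement covers only $g\in T$; for an arbitrary $g\notin S$ continuity at $(\infty,g)$ requires $gK_n\cap S=\emptyset$ for all large $n$, which again follows from nestedness and $\bigcap_nK_n=\{e\}$.) In short: repair the normalization in the forward direction, and the converse can be streamlined to the paper's one-line formula with the coset-grouping estimate above.
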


\begin{proof}
This is immediate from \cite[Examples 5.1.3 (1)]{Anantharaman-Delaroche:2000mw} or \cite[Theorem 3.5]{Renault:2013lp}.  For the convenience of the reader, however, we also provide a direct proof.  

Assume $G$ is amenable, and let a finite subset $F$ of $\Gamma$ and $\delta>0$ be given.  Let $K$ be the finite (hence compact) subset $\{\infty\}\times F$ of $G$, and let $\eta:G\to [0,1]$ be as in the definition of amenability for $G$ for the compact subset $K$ and error tolerance $\epsilon<\delta/(1+\delta)$.  Write $M=\sum_{g\in \Gamma} \eta(\infty,g)$ (and note that this is at least $1-\epsilon$) and define $\xi:\Gamma\to [0,1]$ by
$$
\xi(g)=\frac{1}{M}\eta(\infty,g);
$$
it is not difficult to check this works.  

Conversely, assume that $\Gamma$ is amenable, and let a compact subset $K$ of $G$ and $\epsilon>0$ be given.  Let $F$ be a finite subset of $\Gamma$ such that $\{n\}\times \pi_n(F)\supseteq K\cap \{n\}\times \Gamma_n$ for all $n$ (compactness of $K$ implies that such a set exists), and let $\xi:\Gamma\to [0,1]$ be as in the definition of amenability for this $F$ and error tolerance $\delta=\epsilon$.  Define $\eta:G\to[0,1]$ by
$$
\eta(n,g)=\sum_{h\in \pi_n^{-1}(g)}\xi(h);
$$
it is again not difficult to check that this works.
\end{proof}

Our next goal is to characterise when the maximal and reduced groupoid $C^*$-algebras of an HLS groupoid $G$ are equal.  General definitions of the maximal and reduced $C^*$-algebras of an \'{e}tale groupoid can be found in \cite[Definition 2.3.18]{Renault:2009zr} or \cite[pages 159 -- 160]{Brown:2008qy}; the next definition specialises these to HLS groupoids.

\begin{definition}\label{gpdalg}
Let $G$ be an HLS groupoid associated to the approximated group $(\Gamma,(K_n))$.  Let $C_c(G)$ denote the space of continuous, compactly supported, complex-valued functions on $G$ equipped with the convolution product and involution defined by
$$
(f_1f_2)(n,g):=\sum_{h\in \Gamma_n} f_1(n,gh^{-1})f_2(n,h),~~~f^*(n,g):=\overline{f(n,g^{-1})}.
$$
The \emph{maximal $C^*$-algebra of $G$}, denoted $C^*_{\max}(G)$, is the completion of $C_c(G)$ for the norm
$$
\|f\|_{\max}:=\sup\{\|\rho(f)\|~|~\rho:C_c(G)\to \mathcal{B}(H) \text{ a $*$-homomorphism }\}.
$$

For $n\in \N\cup\{\infty\}$, define a $*$-representation $\rho_n:C_c(G)\to \mathcal{B}(l^2(\Gamma_n))$ by the formula
$$
(\rho_n(f)\xi)(g)=\sum_{h\in \Gamma_n}f(n,gh^{-1})\xi(h).
$$
The \emph{reduced $C^*$-algebra of $G$}, denoted $C^*_\red(G)$, is the completion of $C_c(G)$ for the norm 
$$
\|f\|_\red:=\sup\{\|\rho_n(f)\|~|~n\in \N\cup\{\infty\}\}.
$$
\end{definition}

Consider now the quotient $*$-homomorphism 
\begin{equation}\label{pi quot}
\psi:C_c(G)\to \C[\Gamma]
\end{equation}
defined by restriction to the fibre at infinity.  Let $G_\N$ denote the open subgroupoid of $G$ consisting of all pairs $(n,g)$ with $n\in \N$.  The kernel of $\psi$ is equal to the $*$-algebra
$$
C_{c,0}(G_\N):=\{f\in C_c(G)~|~f(\infty,g)=0\text{ for all } g\in \Gamma\}.
$$
Before proving our characterisation of when $C^*_{\max}(G)=C^*_\red(G)$, we will need the following ancillary lemma.

\begin{lemma}\label{unicom}
The $*$-algebra $C_{c,0}(G_\N)$ has a unique $C^*$-algebra completion, which canonically identifies with the $C^*$-algebraic direct sum $\bigoplus_n C^*_\red(\Gamma_n)$ (equivalently, with the reduced groupoid $C^*$-algebra $C^*_\red(G_\N)$).
\end{lemma}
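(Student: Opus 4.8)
The plan is to realize $A := C_{c,0}(G_\N)$ concretely inside the $C^*$-algebra $B := \bigoplus_n C^*_\red(\Gamma_n)$ (the $c_0$-direct sum of the finite-dimensional algebras $B_n := C^*_\red(\Gamma_n) = \C[\Gamma_n]$), and then to pin down \emph{all} of its $C^*$-norms by exploiting the finite-dimensional summands. First I would set up the identification via the $*$-homomorphism $\Phi : A \to \prod_n C^*_\red(\Gamma_n)$, $\Phi(f) = (f(n,\cdot))_n$. I would check that the compact support of $f$ forces a fixed finite $F \subseteq \Gamma$ with $f(n,\cdot)$ supported on $\pi_n(F)$ for all $n$, while continuity at infinity together with $f(\infty,\cdot)=0$ gives $f(n,\pi_n(g)) \to 0$ for each $g$; since $\bigcap_n K_n = \{e\}$ makes $F \to \Gamma_n$ eventually injective, this yields $\|f(n,\cdot)\|_{C^*_\red(\Gamma_n)} \to 0$, so $\Phi$ lands in $B$. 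It is plainly injective, and its range contains the algebraic direct sum $\bigoplus_n^{\mathrm{alg}} \C[\Gamma_n]$ — every function supported on a single finite fibre lies in $A$ — hence is dense in $B$. Finally $\|\rho_n(f)\| = \|f(n,\cdot)\|_{C^*_\red(\Gamma_n)}$ and the reduced norm of $G_\N$ is $\sup_n\|\rho_n(f)\|$, so $\Phi$ is isometric for $\|\cdot\|_\red$; this identifies $C^*_\red(G_\N)$ with $B$.

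It then remains to prove that $A$ has a unique $C^*$-norm, and the argument splits into an upper and a lower bound. For the upper bound I would observe that $G_\N$, being a bundle of finite (hence amenable) groups over the discrete space $\N$, is amenable: given a compact (necessarily finite) subset $K$ of $G_\N$ contained in the fibres over a finite set $S$, the function $\eta(n,\cdot) = |\Gamma_n|^{-1}$ for $n \in S$ and $0$ otherwise verifies Definition \ref{amendef} with \emph{zero} error, since each fibre is a finite group and the normalised counting measure is invariant. By Theorem \ref{gpdhul} this gives $\|\cdot\|_{\max} = \|\cdot\|_\red = \|\cdot\|_B$ on $A$; as the maximal norm dominates every $C^*$-seminorm, any $C^*$-norm $\|\cdot\|'$ on $A$ satisfies $\|\cdot\|' \le \|\cdot\|_B$.

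For the lower bound, let $A'$ be the completion of $A$ in such a $\|\cdot\|'$. The inequality $\|\cdot\|' \le \|\cdot\|_B$ means the identity on $A$ extends to a surjective $*$-homomorphism $q : B \to A'$, with kernel a closed ideal $J \trianglelefteq B$. The crucial feature is that $A$ contains each summand $B_n$ outright (the functions supported on the $n$-th fibre), while $q$ is injective on $A$ because $\|\cdot\|'$ is a genuine norm. Hence for $x \in J$ the central projection $e_n$ (the unit of $B_n$) gives $e_n x = e_n x e_n \in J \cap B_n \subseteq J \cap A = \{0\}$, so $e_n x = 0$ for every $n$; since the $B$-norm of $x$ is $\sup_n\|e_n x\|$, we get $x = 0$. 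Thus $q$ is an isomorphism and $\|\cdot\|' = \|\cdot\|_B$, so the $C^*$-norm is unique and the completion is $B \cong C^*_\red(G_\N) \cong \bigoplus_n C^*_\red(\Gamma_n)$.

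The main obstacle is the upper bound $\|\cdot\|_{\max} \le \|\cdot\|_B$: one must rule out \emph{exotic} representations of $A$ that could blow up on the tail of the bundle, where $f$ is spread over infinitely many fibres and is only a $\|\cdot\|_B$-limit of finite-fibre-support elements (on each of which every representation is automatically contractive). It is precisely the amenability of the subgroupoid $G_\N$, fed through Theorem \ref{gpdhul}, that controls this tail. Once that is secured, the lower bound is essentially formal: each finite-dimensional $B_n$ sits inside $A$, so no nonzero ideal of $B$ can meet $A$ trivially.
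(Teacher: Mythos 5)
Your identification of $C^*_\red(G_\N)$ with $B=\bigoplus_n C^*_\red(\Gamma_n)$ is correct, and your lower-bound argument via the central projections $e_n$ is a clean alternative to part of the paper's direct computation. The problem is the upper bound, which you yourself flag as the main obstacle: your appeal to Theorem \ref{gpdhul} does not do the job there, and that is precisely where the real content of Lemma \ref{unicom} lies. Theorem \ref{gpdhul} is a statement about the groupoid $C^*$-algebras $C^*_{\max}(G_\N)$ and $C^*_\red(G_\N)$, i.e.\ about completions of $C_c(G_\N)$, the functions supported on \emph{finitely many} fibres; it says nothing about $*$-representations or $C^*$-norms of the strictly larger algebra $A=C_{c,0}(G_\N)$. (Indeed, for $C_c(G_\N)$, an algebraic direct sum of matrix algebras, uniqueness of the $C^*$-norm is automatic, so amenability of $G_\N$ adds no information at all.) Knowing that a $C^*$-norm $\|\cdot\|'$ on $A$ restricts to $\|\cdot\|_B$ on $C_c(G_\N)$ does not formally yield $\|f\|'\le\|f\|_B$ for $f$ spread over infinitely many fibres: the truncations $f_N$ are $\|\cdot\|'$-Cauchy and converge to $f$ in $\|\cdot\|_B$, but without an estimate on $\|f-f_N\|'$ you cannot identify their $\|\cdot\|'$-limit with $f$; a priori an exotic norm could see a "phantom" piece of $f$ at infinity. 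Similarly, the phrase "the maximal norm dominates every $C^*$-seminorm" begs the question on $A$, since the finiteness of $\sup_\rho\|\rho(f)\|$ over all $*$-representations of $A$ is part of what must be proved.

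Moreover, the general principle you are implicitly using --- that a dense $*$-subalgebra $D$ with $C_c(H)\subseteq D\subseteq C^*_\red(H)$, $H$ amenable, has a unique $C^*$-norm --- is false. Take $H=\N$ (the trivial groupoid, certainly amenable), so $C^*_\red(H)=c_0$; let $x=(1/n)_n$ and $D=c_{00}+\{p(x)~|~p\in\C[t],\ p(0)=0\}$. The formula $\phi(c+p(x)):=p(2)$ is a well-defined $*$-character of $D$ killing $c_{00}$ (well defined because a polynomial vanishing at every $1/n$ is zero), so $\max(\|\cdot\|_\infty,|\phi(\cdot)|)$ is a $C^*$-norm on $D$ strictly larger than $\|\cdot\|_\infty$ at $x$. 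So amenability of the underlying groupoid cannot by itself "control the tail"; one must exploit the specific structure of $C_{c,0}(G_\N)$. That is what the paper's proof does: it decomposes $f-f_N$ into at most $|F|$ pieces $h$ supported on graphs $\{(n,\pi_n(g))~|~n>N\}$, observes that each such $h$ satisfies $h^*h\in C_0(\N)$, where $C_0(\N)$ is a genuine (complete) $C^*$-subalgebra of $A$ on which every $C^*$-norm must restrict to the sup norm, so that $\|h\|^2=\|h^*h\|_{C_0(\N)}$ in \emph{any} $C^*$-norm; this gives $\|f-f_N\|\le|F|\sup_{n>N}\sup_g|f(n,g)|\to0$, and combined with the unique norm on $C_c(G_\N)$ it pins down $\|f\|$. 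If you replace the appeal to Theorem \ref{gpdhul} by this (or an equivalent) tail estimate, the rest of your argument goes through.
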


\begin{proof}
Note first that the $*$-subalgebra $C_c(G_\N)$ of $C_{c,0}(G_\N)$ is isomorphic to the algebraic direct sum $\bigoplus_{n\in \N} \C[\Gamma_n]$ of group algebras; in particular, it is isomorphic to an algebraic direct sum of matrix algebras, so has a unique $C^*$-algebra completion: the $C^*$-algebraic direct sum $\bigoplus_n C^*_\red(\Gamma_n)$.  For brevity, write $A$ for $\bigoplus_n C^*_\red(\Gamma_n)$ in the rest of the proof.

The $*$-algebra $C_{c,0}(G_\N)$ contains the commutative $C^*$-algebra $C_0(\N)$ as a $*$-subalgebra, and therefore any $C^*$-norm on $C_{c,0}(G_\N)$ restricts to the usual supremum norm on $C_0(\N)$.  Fix $g\in \Gamma$, and say that $f\in C_{c,0}(G_\N)$ is supported in the subset $\{(n,\pi_n(g))\in G~|~n\in \N\}$.  Then for any $C^*$-norm on $C_{c,0}(G)$,
\begin{equation}\label{g set}
\|f^*f\|=\|n\mapsto |f(n,\pi_n(g))|^2\|_{C_0(\N)}=\sup_{n\in \N}|f(n,\pi_n(g))|^2.
\end{equation}

Let now $f$ be an arbitrary element of $C_{c,0}(G_\N)$.  As $f$ is in $C_c(G)$, there is a finite subset $F$ of $\Gamma$ such that $f$ is supported in 
$$
\{(n,g)\in G~|~g\in \pi_n(F)\}.
$$
For fixed $N\in \N$, write $f_N$ for the restriction of $f$ to $\{(n,g)\in G~|~n\leq N\}$.  For any $C^*$-algebra norm $\|\cdot \|$ on $C_{c,0}(G_\N)$, it follows from line \eqref{g set} that 
$$
\|f-f_N\|\leq |F|\sup_{n>N}\sup_{g\in \Gamma_n}|f(n,g)|
$$
and therefore (as $f$ is in $C_{c,0}(G_\N)$), we have that $\|f-f_N\|\to 0$ as $N\to\infty$.  On the other hand, $f_{N}$ is in $C_c(G_\N)$ and therefore $\|f_{N}\|=\|f_{N}\|_{A}$ by uniqueness of the $C^*$-algebra norm on $C_c(G_\N)$.  In particular $(f_N)_{N=1}^\infty$ is Cauchy in the $A$ norm, and its limit in $A$ identifies naturally with $f$.  It follows that $\|f\|_{A}$ makes sense, and that $\|f\|=\|f\|_{A}$, completing the proof.
\end{proof}

\begin{lemma}\label{maxred}
Let $G$ be the HLS groupoid associated to an approximated group $(\Gamma,(K_n))$.  For each $n\in \N\cup\{\infty\}$, let 
$$
\lambda_n:\C[\Gamma]\to\mathcal{B}(l^2(\Gamma_n))
$$
denote the quasi-regular representation induced by the left multiplication action of $\Gamma$ on $\Gamma_n$.  

Then $C^*_{max}(G)=C^*_\red(G)$ if and only if the maximal norm on $\C[\Gamma]$ equals the norm defined by
\begin{equation}\label{norm}
\|x\|:=\sup_{n\in \N\cup\{\infty\}}\|\lambda_n(x)\|
\end{equation}
\end{lemma}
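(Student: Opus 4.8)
The plan is to compare the two norms $\|\cdot\|_{\max}$ and $\|\cdot\|_\red$ on $C_c(G)$ directly. Since $\|\cdot\|_{\max}\geq\|\cdot\|_\red$ always, there is a canonical surjection $\Phi:C^*_{\max}(G)\to C^*_\red(G)$ restricting to the identity on $C_c(G)$, and $C^*_{\max}(G)=C^*_\red(G)$ exactly when $\Phi$ is injective. By Lemma \ref{unicom} the ideal $C_{c,0}(G_\N)$ has a unique completion $A:=\bigoplus_n C^*_\red(\Gamma_n)$, so the closures of $C_{c,0}(G_\N)$ inside $C^*_{\max}(G)$ and inside $C^*_\red(G)$ both coincide with $A$, and $\Phi$ restricts to the identity on this common ideal. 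An elementary diagram chase then shows $\ker\Phi=0$ if and only if the induced map on quotients $C^*_{\max}(G)/A\to C^*_\red(G)/A$ is injective. Each quotient is the completion of $C_c(G)/C_{c,0}(G_\N)\cong\C[\Gamma]$ (the isomorphism being $\psi$) in the respective quotient norm, so the whole lemma reduces to identifying these two quotient norms on $\C[\Gamma]$.

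For the maximal side, I claim the quotient norm is exactly $\|\cdot\|_{\max,\Gamma}$. Indeed, any unitary representation $\sigma$ of $\Gamma$ yields a $*$-representation $\sigma\circ\psi$ of $C_c(G)$, so $\|\sigma(\psi(f))\|\leq\|f\|_{\max}$ for every $f$; taking the infimum over lifts of a fixed $x$ and the supremum over $\sigma$ gives quotient norm $\geq\|x\|_{\max,\Gamma}$. Conversely the quotient norm is a $C^*$-norm on $\C[\Gamma]$ and hence is dominated by the maximal one.

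For the reduced side, I must show the quotient norm is $\|x\|_\lambda:=\sup_{n\in\N\cup\{\infty\}}\|\lambda_n(x)\|$; here for finite $n$ the representation $\rho_n$ factors through the fibre $f\mapsto f(n,\cdot)$ and equals $\lambda_{\Gamma_n}(f(n,\cdot))$, while $\rho_\infty(f)=\lambda_\infty(\psi(f))$. The upper bound is easy: the diagonal lift $f$ defined by $f(n,\cdot)=\pi_n(x)$ for $n\in\N$ and $f(\infty,\cdot)=x$ is continuous and compactly supported with $\|f\|_\red=\|x\|_\lambda$. The lower bound is the main obstacle. For a fixed finite index $m$ one cannot simply bound $\|f\|_\red\geq\|\lambda_m(x)\|$, because a lift is free to take the value $0$ on the single fibre over $m$ while still converging to $x$ at infinity. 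What rescues the statement is the nesting hypothesis: $K_{n+1}\subseteq K_n$ gives a quotient map $\Gamma_{n+1}\to\Gamma_n$ along which the regular representation of $\Gamma_n$ inflates to a subrepresentation of the regular representation of $\Gamma_{n+1}$, so that $\lambda_n$ is (equivalent to) a subrepresentation of $\lambda_{n+1}$ as representations of $\Gamma$, whence $\|\lambda_n(x)\|\leq\|\lambda_{n+1}(x)\|$ for all $x$. Thus $(\|\lambda_n(x)\|)_{n\in\N}$ is non-decreasing and $\sup_{n\in\N}\|\lambda_n(x)\|=\lim_{n\to\infty}\|\lambda_n(x)\|$.

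Using this, I would finish the lower bound as follows. Any lift $f$ of $x$ is supported in $\{(n,g):g\in\pi_n(F)\}$ for some finite $F\subseteq\Gamma$, and continuity forces $f(n,\pi_n(g))\to x(g)$; combined with $\bigcap_nK_n=\{e\}$ (so that $\pi_n$ is eventually injective on $F$) this gives $\|f(n,\cdot)-\pi_n(x)\|_{1}\to 0$ and hence $\|\rho_n(f)\|\to\lim_n\|\lambda_n(x)\|$. Therefore $\|f\|_\red\geq\limsup_n\|\rho_n(f)\|=\lim_n\|\lambda_n(x)\|=\sup_{n\in\N}\|\lambda_n(x)\|$, and also $\|f\|_\red\geq\|\rho_\infty(f)\|=\|\lambda_\infty(x)\|$; taking the infimum over lifts yields quotient norm $\geq\|x\|_\lambda$. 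Combining the two sides identifies the reduced quotient norm with $\|\cdot\|_\lambda$, and the diagram chase then gives $C^*_{\max}(G)=C^*_\red(G)$ if and only if $\|\cdot\|_{\max,\Gamma}=\|\cdot\|_\lambda$, as required. The delicate point worth isolating is the monotonicity above: without the nesting assumption the reduced quotient norm would only be the $\limsup$, and the statement with $\sup$ would fail.
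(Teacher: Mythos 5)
Your proposal is correct and follows essentially the same route as the paper: both reduce via Lemma \ref{unicom} and a diagram chase (the paper invokes the five lemma explicitly) to comparing quotient norms on $\C[\Gamma]$, identify the maximal quotient norm with $\|\cdot\|_{\max}$ by pulling back representations along $\psi$, and compute the reduced quotient norm as $\sup_n\|\lambda_n(\cdot)\|$ using the nesting of the $K_n$ for monotonicity. Your perturbation argument over arbitrary lifts is a minor variant of the paper's computation with a single explicit lift and the $c_0$-structure of the kernel, so there is nothing substantively different to flag.
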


\begin{proof}
Let $\psi:C_c(G)\to \C[\Gamma]$ be the quotient $*$-homomorphism from line \eqref{pi quot}.  The $C^*_{\max}(G)$ and $C^*_\red(G)$ norms on $C_c(G)$ induce the $C^*$-algebra norms
\begin{align*}
\|x\|_{\infty,\max} & :=\inf\{\|f\|_{\max}~|~f\in \psi^{-1}(x)\} \\ \|x\|_{\infty,\red} & :=\inf\{\|f\|_{\red}~|~f\in \psi^{-1}(x)\}
\end{align*}
on $\C[\Gamma]$.  These norms give rise to completions $C^*_{\infty,\max}(\Gamma)$ and $C^*_{\infty,\red}(\Gamma)$ of $\C[\Gamma]$ respectively; moreover, as $*$-representations of $\C[\Gamma]$ pull back to $*$-representations of $C_c(G)$ via the map $\psi:C_c(G)\to\C[\Gamma]$, it is immediate that $C^*_{\infty,\max}(\Gamma)=C^*_{\max}(\Gamma)$.   

Putting this together with Lemma \ref{unicom}, there is a commutative diagram of $C^*$-algebras
\begin{equation}\label{commutes}
\xymatrix{0 \ar[r] & \bigoplus_{n\in \N} C^*_\red(\Gamma_n) \ar[r] \ar@{=}[d] & C^*_{\max}(G) \ar[r] \ar[d] &  C^*_{\max}(\Gamma) \ar[r] \ar[d] & 0 \\  0 \ar[r] & \bigoplus_{n\in \N} C^*_\red(\Gamma_n) \ar[r] & C^*_{\red}(G) \ar[r] &  C^*_{\infty,\red}(\Gamma) \ar[r] & 0 },
\end{equation}
where the vertical maps are the canonical quotients induced by the identity map on $C_c(G)$ and $\C[\Gamma]$, and where the horizontal lines are both exact\footnote{Exactness of the top line is a special case of \cite[Lemma 2.8]{Muhly:1996hc}; in our case exactness of both lines follows directly from the discussion above.}.  The five lemma then gives that $C^*_{\max}(G)=C^*_\red(G)$ if and only if $C^*_{\max}(\Gamma)=C^*_{\infty,\red}(\Gamma)$; to complete the proof, we must therefore show that for any $x\in \C[\Gamma]$ we have 
$$
\|x\|_{\infty,\red}=\sup_{n\in \N\cup\{\infty\}}\|\lambda_n(x)\|.
$$

Fix an element $x\in \C[\Gamma]$.  For some suitably large $N$ and all $n\geq N$, the quotient map $\pi_n:\Gamma\to \Gamma_n$ is injective on the support of $x$.  Define $f$ to be the element of $C_c(G)$ given by
$$
f(n,g)=\left\{\begin{array}{ll} x(h) & n\geq N \text{ or $n=\infty$, and $g= \pi_n(h)$ for $h\in \text{supp}(x)$} \\ 0 & n<N \end{array}\right.,
$$
which is clearly a lift of $x$ for the quotient map $\psi:C_c(G)\to \C[\Gamma]$.  Now, for each $n\in \N$, write $f_n$ for the restriction of $f$ to $\{(m,g)\in G~|~m\geq n \text{ or } m=\infty\}$ and note by consideration of the kernel $\bigoplus_n C^*_\red(\Gamma_n)$ of the quotient $*$-homomorphism $C^*_\red(G)\to C^*_{\infty,\red}(\Gamma)$ that
$$
\|x\|_{\infty,\red}=\limsup_{n\to\infty}\|f_n\|_{\red}.
$$
However, by definition of the reduced norm (Definition \ref{gpdalg}), 
$$
\|f_n\|_\text{red}=\sup_{m\geq n \text{ or } m=\infty}\|\rho_m(f)\|,
$$ 
and by definition of $f$
$$
\sup_{m\geq n \text{ or } m=\infty}\|\rho_m(f)\|=\sup_{m\geq n \text{ or } m=\infty}\|\lambda_m(x)\|.
$$
We conclude that 
$$
\|x\|_{\infty,\red}=\lim_{n\to\infty}\sup_{m\geq n \text{ or } m=\infty}\|\lambda_m(x)\|.
$$
Finally, note that as the groups $K_n$ are nested, $\|\lambda_n(x)\|\geq \|\lambda_m(x)\|$ whenever $n\geq m$, so
$$
\lim_{n\to\infty}\sup_{m\geq n \text{ or } m=\infty}\|\lambda_m(x)\|=\sup_{n\in \N\cup\{\infty\}}\|\lambda_n(x)\|;
$$
this completes the proof.
\end{proof}

Now, combining Lemmas \ref{amen} and \ref{maxred}, to complete the proof of Theorem \ref{main} it suffices to find an approximated group $(\Gamma,(K_n))$ such that $\Gamma$ is not amenable, but such that the norm on $\C[\Gamma]$ from line \eqref{norm} equals the maximal norm.  We will do this in the next lemma, using property \emph{property FD} of Lubotzky and Shalom \cite{Lubotzky:2004xw} as an essential tool.

\begin{lemma}\label{f2fd}
Say $F_2$ is the free group on two generators.  Set 
$$
K_n=\bigcap\{\text{Ker}(\phi)~|~\phi:F_2\to \Gamma \text{ a group homomorphism, } |\Gamma|\leq n\}.
$$
Then the pair $(F_2,(K_n))$ is an approximated group.  Moreover, if $\lambda_n:\C[F_2]\to \mathcal{B}(l^2(\Gamma_n))$ is as in the statement of Lemma \ref{maxred}, then 
$$
\|x\|_{\max}=\sup_{n\in \N\cup\{\infty\}}\|\lambda_n(x)\|
$$
for all $x\in \C[F_2]$.
\end{lemma}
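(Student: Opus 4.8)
\emph{Proof proposal.} The plan is to treat the two assertions separately: first that $(F_2,(K_n))$ satisfies the three conditions of Definition \ref{approx}, and then the norm identity, which is where property FD does the real work.

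For the first part I would argue as follows. Each kernel $\text{Ker}(\phi)$ of a homomorphism $\phi:F_2\to\Gamma$ with $|\Gamma|\le n$ is a normal subgroup of index at most $n$, since $F_2/\text{Ker}(\phi)$ embeds into $\Gamma$. Because $F_2$ is finitely generated, it has only finitely many subgroups of index at most $n$, so $K_n$ is really a \emph{finite} intersection of finite-index normal subgroups, hence itself normal and of finite index; this gives (i). Condition (ii) is immediate from the definition: enlarging $n$ only enlarges the family of homomorphisms over which we intersect, so $K_{n+1}\subseteq K_n$. For (iii) I would invoke residual finiteness of free groups: given $e\ne g\in F_2$ there is a finite quotient $\phi:F_2\to\Gamma$ with $\phi(g)\ne e$, and taking $n\ge|\Gamma|$ shows $g\notin K_n$, so $\bigcap_n K_n=\{e\}$.

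For the norm identity, the inequality $\sup_n\|\lambda_n(x)\|\le\|x\|_{\max}$ is automatic, since each $\lambda_n$ is a unitary representation of $F_2$ and hence a $*$-representation of $\C[F_2]$, which is dominated by the maximal norm by definition. The substance is the reverse inequality, and here the structure of the $K_n$ is designed precisely to make finite representations visible. For finite $n$ the group $\Gamma_n=F_2/K_n$ is finite and $\lambda_n$ is, via $\pi_n$, exactly the left regular representation of $\Gamma_n$; since the regular representation of a finite group contains every irreducible, any representation $\sigma$ of $F_2$ that factors through $\Gamma_n$ is weakly contained in $\lambda_n$, so $\|\sigma(x)\|\le\|\lambda_n(x)\|$. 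The key combinatorial point is then: if $\sigma$ is any unitary representation of $F_2$ with finite image of order at most $n$, the quotient onto its image is one of the homomorphisms defining $K_n$, whence $K_n\subseteq\text{Ker}(\sigma)$ and $\sigma$ factors through $\Gamma_n$. Consequently every \emph{finite representation} $\sigma$ (one factoring through some finite quotient) satisfies $\|\sigma(x)\|\le\sup_n\|\lambda_n(x)\|$, and combining this with the preceding remark shows that $\sup_{n\in\N}\|\lambda_n(x)\|$ equals the supremum of $\|\sigma(x)\|$ over all finite representations of $F_2$.

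It remains to identify this last supremum with the full maximal norm, and this is exactly where property FD enters. Lubotzky and Shalom prove that free groups have property FD \cite{Lubotzky:2004xw}, i.e.\ that the finite representations are dense in the unitary dual $\hat{F_2}$ for the Fell topology. The consequence I would extract, via the standard translation between the Fell topology and weak containment, is that every irreducible representation $\pi$ of $F_2$ is weakly contained in the direct sum of all finite representations, so that $\|\pi(x)\|\le\sup_{\sigma\text{ finite}}\|\sigma(x)\|$; taking the supremum over $\pi$, which computes $\|x\|_{\max}$, yields $\|x\|_{\max}\le\sup_{\sigma\text{ finite}}\|\sigma(x)\|$, and hence equality. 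Feeding this into the previous paragraph gives $\|x\|_{\max}=\sup_{n\in\N}\|\lambda_n(x)\|\le\sup_{n\in\N\cup\{\infty\}}\|\lambda_n(x)\|\le\|x\|_{\max}$, completing the proof. I expect the main obstacle to be conceptual rather than computational: correctly converting the topological density statement of property FD into the weak-containment norm-domination statement for $C^*_{\max}(F_2)$, together with the verification that every finite representation is genuinely captured by one of the $\lambda_n$ through the defining property of $K_n$.
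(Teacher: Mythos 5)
Your proposal is correct and follows essentially the same route as the paper: verify the approximating-sequence axioms via finite generation and residual finiteness of $F_2$, then use Lubotzky--Shalom's property FD to reduce the maximal norm to finite representations, and capture each finite representation inside some $\lambda_n$ via the fact that the regular representation of a finite group contains every irreducible. The only differences are cosmetic (you count subgroups of bounded index rather than homomorphisms to the finitely many isomorphism types of groups of bounded order, and you phrase the final step with weak containment rather than subrepresentations), neither of which changes the substance.
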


\begin{proof}
Note first that as $F_2$ is finitely generated and as for each $n\in \N$ there are only finitely many isomorphism types of groups of cardinality at most $n$, the collection 
$$
\{\text{Ker}(\phi)~|~\phi:F_2\to F \text{ a group homomorphism, } |F|\leq n\}
$$
of (finite index, normal) subgroups of $F_2$ is finite.  Hence each $K_n$ is of finite index in $F_2$.  Moreover, the sequence $(K_n)$ is clearly nested, and each $K_n$ is clearly normal.  Finally, note that $F_2$ is residually finite, whence $\bigcap_n K_n$ is trivial.  Hence $(K_n)$ is an approximating sequence for $F_2$.

Now, by \cite[Theorem 2.2]{Lubotzky:2004xw}, the collection of all unitary representations of $F_2$ that factor through a finite quotient is dense in the unitary dual of $F_2$.  Equivalently (compare the discussion in \cite[Section 1]{Lubotzky:2004xw}), if we write $S$ for the collection of all irreducible $*$-representations of $\C[F_2]$ that come from unitary representations of $F_2$ that factor through finite quotients, then 
$$
\|x\|_{\max}=\sup\{\|\rho(x)\|~|~\rho\in S\}.
$$
However, any quotient map of $F_2$ with finite image contains all but finitely many of the $K_n$ in its kernel.  Using the basic fact from representation theory that any irreducible representation of a finite group is (unitarily equivalent to) a subrepresentation of the regular representation, we have that any representation in $S$ is (unitarily equivalent to) a subrepresentation of $\lambda_n$ for all suitably large $n$, and are done.
\end{proof}

As $F_2$ is non-amenable, this completes the proof of Theorem \ref{main}.  We conclude this section with a few comments about the sort of approximated groups Lemma \ref{f2fd} applies to.

\begin{remarks}\label{not all}
\begin{enumerate}
\item For many non-amenable $\Gamma$, there are no approximating sequences $(K_n)$ for $\Gamma$ such that the condition `$\|x\|_{\max}=\sup_{n\in \N\cup\{\infty\}}\|\lambda_n(x)\|$  for all $x\in \C[\Gamma]$' holds.  For example, results of Bekka \cite{Bekka:1999kx} show this for $\Gamma=SL(3,\Z)$ and many other higher rank arithmetic groups.
\item On the other hand, there are certainly other non-amenable groups that we could use to replace $F_2$ in Lemma \ref{f2fd}: for example, any fundamental group of a closed Riemann surface of genus at least two would work.   See \cite[Theorem 2.8]{Lubotzky:2004xw} for this and other examples.
\item Even for $\Gamma=F_2$, the specific choice of subgroups in Lemma \ref{f2fd} is important.  For example, say we realise $F_2$ as a finite index subgroup of $SL(2,\Z)$, and choose $K_n$ to be the intersection of the kernel of the reduction map $SL(2,\Z)\to SL(2,\Z/2^n\Z)$ and $F_2$.  Then $(K_n)$ is an approximating sequence for $F_2$ with respect to which $F_2$ has property ($\tau$) - this follows from Selberg's theorem \cite{Selberg:1965qy} as in \cite[Examples 4.3.3 A]{Lubotzky:1994tw}.  Property ($\tau$) in this case means that if we set $S$ to be the collection of all unitary representations of $F_2$ that factor through some quotient $F_2/K_n$, then the trivial representation is isolated in the closure $\overline{S}$ of $S$ taken with respect to the Fell topology.  However, the trivial representation is not isolated in the full unitary dual of $F_2$ (i.e.\ $F_2$ does not have property (T)).  Putting this together, the pair $(F_2, (K_n))$ will not satisfy the condition  `$\|x\|_{\max}=\sup_{n\in \N\cup\{\infty\}}\|\lambda_n(x)\|$  for all $x\in \C[\Gamma]$'.
\end{enumerate}
\end{remarks}

\section{Concluding remarks}\label{comment sec}

We conclude the paper with some (mainly inconclusive!) comments and questions.

\begin{remark}\footnote{This remark grew out of comments of Claire Anantharaman-Delaroche on a preliminary version of this note.} 
Higson, Lafforgue and Skandalis introduced what we have called HLS groupoids as failure of the sequence
$$
\xymatrix{ 0 \ar[r] & C^*_\text{red}(G_\N) \ar[r] & C^*_{\text{red}}(G) \ar[r] & C^*_{\text{red}}(\Gamma) \ar[r] & 0 }
$$
to be exact on the level of $K$-theory for some such $G$ leads to counterexamples to the Baum-Connes conjecture for groupoids.  Similar failures of exactness seem to appear first in \cite[Remark 4.10]{Renault:1991df} (due to Skandalis).  It is clear from the proof of Lemma \ref{maxred} that failure of this sequence to be exact is crucial for our results to hold.  Relatedly, one also has the following lemma.

\begin{lemma}\label{exact}
Let $G$ be an HLS groupoid associated to a countable discrete group $\Gamma$ (and some choice of approximating sequence for $\Gamma$).  The following are equivalent.
\begin{enumerate}[(i)]
\item $\Gamma$ is amenable.
\item $G$ is amenable.
\item $G$ is exact, i.e.\ for any short exact sequence of $G$-$C^*$-algebras 
$$
\xymatrix{ 0 \ar[r] & I \ar[r] & A \ar[r] & B \ar[r] & 0 }
$$
the corresponding sequence 
$$
\xymatrix{ 0 \ar[r] & I\rtimes_{\text{red}}G \ar[r] & A\rtimes_{\red}G \ar[r] & B\rtimes_{\red}G \ar[r] & 0 }
$$
of reduced crossed product $C^*$-algebras is exact.
\item $C^*_{\text{red}}(G)$ is an exact $C^*$-algebra, i.e.\ for any short exact sequence of $C^*$-algebras 
$$
\xymatrix{ 0 \ar[r] & I \ar[r] & A \ar[r] & B \ar[r] & 0 }
$$
the corresponding sequence 
$$
\xymatrix{ 0 \ar[r] & I\otimes C^*_{\text{red}}(G) \ar[r] & A\otimes C^*_{\text{red}}(G) \ar[r] & B\otimes C^*_{\text{red}}(G) \ar[r] & 0 }
$$
of spatial tensor product $C^*$-algebras is exact.
\end{enumerate}
\end{lemma}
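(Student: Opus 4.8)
The plan is to establish (i) $\Leftrightarrow$ (ii) from Lemma \ref{amen}, and then to prove (i) $\Leftrightarrow$ (iii) and (i) $\Leftrightarrow$ (iv) separately. The backbone of everything is the extension
$$
0\longrightarrow\bigoplus_n C^*_\red(\Gamma_n)\longrightarrow C^*_\red(G)\longrightarrow C^*_{\infty,\red}(\Gamma)\longrightarrow 0
$$
coming from the bottom row of diagram \eqref{commutes}, together with the observation that its ideal is a $c_0$-direct sum of finite-dimensional $C^*$-algebras, hence nuclear. I will use freely that exactness passes to quotients and to ideals, and that an extension of an exact algebra by a nuclear ideal is exact.

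For the forward implications I would argue as follows. Given (i), Lemma \ref{amen} gives (ii), and for an amenable \'{e}tale groupoid the reduced and full crossed product functors coincide while the full functor is always exact, so (iii) holds (see \cite{Anantharaman-Delaroche:2000mw}); moreover $C^*_\red(G)=C^*_{\max}(G)$ is then nuclear, hence exact, giving (iv). The implication (iii) $\Rightarrow$ (iv) can also be obtained directly by feeding a short exact sequence $0\to I\to A\to B\to 0$ of $C^*$-algebras into (iii): one forms the $G$-$C^*$-algebras $C_0(G^{(0)})\otimes A$, etc., with $G$ acting only through the unit space, and uses the natural isomorphism $(C_0(G^{(0)})\otimes A)\rtimes_\red G\cong A\otimes C^*_\red(G)$ to see that (iii) for this sequence is exactly the exactness asserted in (iv).

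The implication I regard as the clean heart of the matter is (iii) $\Rightarrow$ (i). Here I would apply exactness of $G$ to the sequence of $G$-$C^*$-algebras $0\to C_0(\N)\to C(\N\cup\{\infty\})\to\C\to 0$ determined by the closed invariant point $\{\infty\}\subseteq G^{(0)}$. Since $C_0(\N)\rtimes_\red G=C^*_\red(G_\N)=\bigoplus_n C^*_\red(\Gamma_n)$, since $C(\N\cup\{\infty\})\rtimes_\red G=C^*_\red(G)$, and since restriction to the fibre at infinity gives $\C\rtimes_\red G=C^*_\red(\Gamma)$, condition (iii) forces the sequence
$$
0\longrightarrow\bigoplus_n C^*_\red(\Gamma_n)\longrightarrow C^*_\red(G)\longrightarrow C^*_\red(\Gamma)\longrightarrow 0
$$
to be exact. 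Comparing this with the extension above (same algebra, same ideal, same quotient map up to the canonical identifications) shows that the canonical surjection $C^*_{\infty,\red}(\Gamma)\to C^*_\red(\Gamma)$ is an isomorphism, i.e.\ $\sup_n\|\lambda_n(x)\|=\|\lambda_\infty(x)\|$ for all $x\in\C[\Gamma]$. As each finite-quotient representation $\lambda_n$ contains the trivial representation of $\Gamma$, this says the trivial representation is weakly contained in the regular representation $\lambda_\infty$, and Hulanicki's theorem then yields amenability of $\Gamma$, i.e.\ (i).

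The main obstacle is the remaining implication (iv) $\Rightarrow$ (i). Exactness of $C^*_\red(G)$ passes to the quotient $C^*_{\infty,\red}(\Gamma)$, so the task reduces to showing that exactness of $C^*_{\infty,\red}(\Gamma)$ already forces $\Gamma$ to be amenable. This is subtle precisely because (iv), unlike (iii), only controls the \emph{automatic} quotient $C^*_{\infty,\red}(\Gamma)$ rather than the honest quotient $C^*_\red(\Gamma)$: the surjection $C^*_{\infty,\red}(\Gamma)\to C^*_\red(\Gamma)$ merely shows that $\Gamma$ is an exact group, which is strictly weaker than amenability. The extra leverage must come from the finite quotients, which equip $C^*_{\infty,\red}(\Gamma)$ with the character given by the trivial representation; when $\Gamma$ is non-amenable this character does not factor through $C^*_\red(\Gamma)$, reflecting the fact (Kesten) that a symmetric averaging element $x_\mu\in\C[\Gamma]$ has $\|\lambda_n(x_\mu)\|=1$ for every $n$ but $\|\lambda_\infty(x_\mu)\|<1$. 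I expect the hard part to be turning this persistent spectral gap into an explicit violation of exactness, for instance by tensoring with the Calkin-type algebra $\prod_n C^*_\red(\Gamma_n)/\bigoplus_n C^*_\red(\Gamma_n)$ and exhibiting an element of the kernel of the induced map that does not lie in the expected ideal, in the manner of Kirchberg and Wassermann. An alternative and possibly cleaner strategy would be to prove (iv) $\Rightarrow$ (iii) outright---that exactness of $C^*_\red(G)$ as a $C^*$-algebra implies exactness of the groupoid $G$---and then quote the argument for (iii) $\Rightarrow$ (i); carrying out this reduction between the two notions of exactness for group bundles would itself be the main obstacle in that route.
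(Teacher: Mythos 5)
Your implications (i)$\Leftrightarrow$(ii), (i)$\Rightarrow$(iii), and (iii)$\Rightarrow$(iv) are fine and agree with the paper's, and your argument for (iii)$\Rightarrow$(i) --- applying groupoid exactness to the sequence $0\to C_0(\N)\to C(\N\cup\{\infty\})\to\C\to 0$, identifying the three crossed products as $\bigoplus_n C^*_\red(\Gamma_n)$, $C^*_\red(G)$ and $C^*_\red(\Gamma)$, deducing that the canonical surjection $C^*_{\infty,\red}(\Gamma)\to C^*_\red(\Gamma)$ is isometric, and then using that each finite-quotient representation $\lambda_n$ contains the trivial representation to get weak containment of the trivial representation in $\lambda_\infty$ --- is correct, and is in fact not in the paper at all (the paper never proves (iii)$\Rightarrow$(i) directly). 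The problem is that your proof, as written, does not establish the stated equivalence: with only (i)$\Leftrightarrow$(ii)$\Leftrightarrow$(iii) and (iii)$\Rightarrow$(iv) in hand, nothing flows back out of (iv), and you explicitly leave (iv)$\Rightarrow$(i) (or (iv)$\Rightarrow$(iii)) as an expectation rather than an argument. That is a genuine gap, and it is precisely the step around which the paper's proof is organized.

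What the paper does there is short but essential: since quotients of exact $C^*$-algebras are exact \cite[Theorem 10.2.4]{Brown:2008qy}, exactness of $C^*_\red(G)$ passes to the quotient $C^*_{\infty,\red}(\Gamma)$; the paper then invokes \cite[Proposition 3.7.11]{Brown:2008qy} --- the Kirchberg--Wassermann theorem that for a residually finite group (which $\Gamma$ automatically is, having an approximating sequence) exactness of $C^*_{\max}(\Gamma)$ forces amenability --- and observes that its proof applies essentially verbatim with $C^*_{\infty,\red}(\Gamma)$ in place of $C^*_{\max}(\Gamma)$, because that argument only uses the representations of $\Gamma$ factoring through finite quotients, and these are retained by the norm $\sup_{n\in\N\cup\{\infty\}}\|\lambda_n(\cdot)\|$ defining $C^*_{\infty,\red}(\Gamma)$. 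Your closing paragraph names exactly this circle of ideas (the trivial-representation character surviving on $C^*_{\infty,\red}(\Gamma)$, Kesten's spectral gap, the $\prod/\bigoplus$ device of Kirchberg and Wassermann), so you identified the right mechanism, but you stop at ``I expect the hard part to be\dots'' instead of carrying it out or citing a result that does. To close the gap with minimal change to your write-up, replace that paragraph by: exactness of $C^*_\red(G)$ passes to its quotient $C^*_{\infty,\red}(\Gamma)$, and the proof of \cite[Proposition 3.7.11]{Brown:2008qy} applies verbatim to this completion, yielding amenability of $\Gamma$; combined with your (iii)$\Rightarrow$(i) you would then even have two independent returns into (i), whereas the paper uses only the route through (iv).
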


\begin{proof}
If $\Gamma$ is amenable, $G$ is amenable by Lemma \ref{amen}, giving (i) implies (ii).  The implication (ii) implies (iii) follows from \cite[Proposition 3.3.5 and Proposition 6.1.10]{Anantharaman-Delaroche:2000mw}, which show that topological amenability implies that the maximal and reduced crossed products are the same, and \cite[Lemma 6.3.2]{Anantharaman-Delaroche:2000mw}, which shows that the maximal crossed product always preserves exact sequences in this sense.  The implication (iii) implies (iv) follows easily by considering trivial actions.  

To complete the circle of implications, say $\Gamma$ is non-amenable; we will show that $C^*_{\text{red}}(G)$ is not an exact $C^*$-algebra.  Let $C^*_{\text{red},\infty}(\Gamma)$ be as in the proof of Lemma \ref{maxred}; as quotients of exact $C^*$-algebras are exact \cite[Theorem 10.2.4]{Brown:2008qy}, it suffices to prove that $C^*_{\text{red},\infty}(\Gamma)$ is not exact.  Now,  \cite[Proposition 3.7.11]{Brown:2008qy} shows that for a residually finite group $\Gamma$, the maximal group $C^*$-algebra $C^*_{\max}(\Gamma)$ is exact if and only if $\Gamma$ is amenable.  However, it is not difficult to see that the proof of \cite[Proposition 3.7.11]{Brown:2008qy} applies essentially verbatim with $C^*_{\text{red},\infty}(\Gamma)$ replacing $C^*_{\max}(\Gamma)$, so $C^*_{\text{red},\infty}(\Gamma)$ is also not exact if $\Gamma$ is not amenable.
\end{proof}

On the other hand, the following theorem appears in work of Kerr \cite[Theorem 2.7]{Kerr:2011fk} and Matsumura \cite{Matsumura:2012aa}.  

\begin{theorem}\label{km}
Say $\Gamma$ is a discrete group acting by homeomorphisms on a compact space $X$, and assume that $\Gamma$ is an exact group.

Then $C(X)\rtimes_{max} \Gamma=C(X)\rtimes_{\text{red}}\Gamma$ if and only if the action of $\Gamma$ on $X$ is amenable. \qed
\end{theorem}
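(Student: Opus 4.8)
The plan is to prove the two implications separately. Exactness of $\Gamma$ plays no role in the forward direction and is essential only in the converse, so I would isolate it accordingly.

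For the forward direction (amenable action $\Rightarrow$ equality of crossed products), I would pass to the transformation groupoid $X\rtimes\Gamma$. Since $\Gamma$ is discrete and $X$ is compact Hausdorff, this groupoid is locally compact, Hausdorff and \'{e}tale, its unit space is $X$, and its maximal and reduced groupoid $C^*$-algebras are canonically $C(X)\rtimes_{max}\Gamma$ and $C(X)\rtimes_{\red}\Gamma$ respectively. By definition, amenability of the action is exactly topological amenability of $X\rtimes\Gamma$, so the desired equality is immediate from Theorem \ref{gpdhul}; no hypothesis on $\Gamma$ is needed here.

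The substance is the converse. I would first recall the Reiter-type characterisation of amenability of the action: it holds if and only if there is a net $(\mu_i)$ of continuous maps $x\mapsto \mu_i^x$ from $X$ into the finitely supported probability measures on $\Gamma$ such that $\sup_{x\in X}\|g_*\mu_i^x-\mu_i^{gx}\|_1\to 0$ for every $g\in\Gamma$. The whole problem is then to manufacture such a net from the single hypothesis $\|\cdot\|_{max}=\|\cdot\|_{\red}$ on $C_c(\Gamma,C(X))$. The strategy I would follow is to read the equality of norms as a weak-containment statement --- the canonical $C(X)$-representation (the integrated form of the trivial $\Gamma$-representation together with the identity on $C(X)$) is weakly contained in the regular representation of the crossed product --- and to convert this, via a Hahn--Banach separation argument in the relevant state space, into approximately invariant positive-type functions $\Gamma\to C(X)$ that approximate the unit. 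This produces an approximately invariant ``mean'' living a priori only in a bidual, or in $\ell^\infty(\Gamma,C(X))$.

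The main obstacle, and the only place exactness enters, is upgrading these abstract weak-$*$ approximants to genuine norm-continuous probability-measure-valued functions as required by the Reiter condition. Here I would invoke exactness of $\Gamma$ in the form of the slice-map / local reflexivity property of $C^*_\red(\Gamma)$, which allows one to replace the dual approximants by honest elements of $C_c(\Gamma,C(X))$ while keeping the approximate-invariance estimates uniform over $X$. This step is genuinely necessary: for non-exact $\Gamma$ the equality of norms does \emph{not} force the action to be amenable, so any correct argument must use exactness precisely to rule out that pathology and to secure the passage from a weak approximate mean to a continuous Reiter net.
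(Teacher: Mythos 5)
Your forward direction is fine (and you are right that exactness plays no role there). But note first that the paper itself offers no proof of this theorem: it is quoted from Kerr and Matsumura with a \qed in the statement, so the comparison has to be with their arguments. Against those, your converse has a genuine gap, and it starts at the very first step. The "canonical $C(X)$-representation" you propose to use --- the integrated form of the identity representation of $C(X)$ paired with the trivial representation of $\Gamma$ --- does not exist: a covariant pair $(\pi,u)$ must satisfy $u_g\pi(f)u_g^*=\pi(\alpha_g(f))$, so taking $u$ trivial forces $\pi(\alpha_g(f))=\pi(f)$, which for faithful $\pi$ means the action itself is trivial. The constant function $g\mapsto 1_{C(X)}$ is of positive type for the action, but its GNS-type object is the canonical conditional expectation $E:C(X)\rtimes_{\max}\Gamma\to C(X)$, a u.c.p.\ map rather than a $*$-representation; consequently the hypothesis $\|\cdot\|_{\max}=\|\cdot\|_{\red}$, which is a statement about $*$-representations, cannot be read off as "weak containment of a distinguished representation in the regular one." This is precisely the disanalogy with Hulanicki's theorem for groups, and it is why the converse is hard (and why, as this paper shows, it fails outright for general \'{e}tale groupoids).

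The second gap is the step where you invoke exactness "in the form of local reflexivity / slice maps" to upgrade weak-$*$ approximants to a continuous Reiter net. No such argument is known, and it is not how the cited proofs use exactness. Matsumura's argument (for commutative coefficients, which is the case at hand) runs structurally differently: by Ozawa's theorem, exactness of $\Gamma$ is equivalent to amenability of the action $\Gamma\curvearrowright\beta\Gamma$; hence the diagonal action of $\Gamma$ on $\beta\Gamma\times X$ is amenable, so the maximal and reduced crossed products of $C(\beta\Gamma\times X)$ agree. One then compares the commuting square of crossed products for the equivariant inclusion $C(X)\hookrightarrow C(\beta\Gamma\times X)$ with the hypothesis $C(X)\rtimes_{\max}\Gamma=C(X)\rtimes_{\red}\Gamma$ to produce a $\Gamma$-equivariant u.c.p.\ map $\ell^\infty(\Gamma,C(X))\to C(X)^{**}$, i.e.\ von Neumann--type amenability of the bidual action; finally one invokes Anantharaman-Delaroche's theorem that, for \emph{commutative} coefficient algebras, amenability of the bidual $W^*$-action implies topological amenability of the original action. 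That last passage from a weak/measurable invariant mean to a continuous one is a real theorem that uses commutativity of $C(X)$ essentially --- it is exactly the point your sketch waves at. Finally, your closing claim that for non-exact $\Gamma$ equality of the norms does \emph{not} force amenability is unjustified: at the time of this paper that was an open problem (it is question (ii) in the concluding remarks, and the author explicitly says the HLS examples shed no light on it), so you cannot appeal to it to argue that exactness "must" enter where you placed it.
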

\noindent In groupoid language, the theorem says that if  $X\rtimes \Gamma$ is an exact transformation groupoid (in the sense of Lemma \ref{exact}, part (iii)) associated to an action of a discrete group on a compact space, then $C^*_{\max}(X\rtimes \Gamma)=C^*_{\text{red}}(X\rtimes\Gamma)$ if and only if $X\rtimes\Gamma$ is amenable.

Given this theorem, our example, and Lemma \ref{exact} above, it is natural to ask the following questions.
\begin{enumerate}[(i)]
\item Can one extend Theorem \ref{km} to general exact (\'{e}tale) groupoids? 
\item Are there examples of (non-amenable) transformation groupoids $X\rtimes \Gamma$ such that $C(X)\rtimes_{\max} \Gamma =C(X)\rtimes_\red \Gamma$ where $\Gamma$ is a non-exact group?
\end{enumerate}
Given the current state of knowledge about non-exact groups, (ii) is probably very difficult; our results in this note can be seen as giving a little evidence that such examples might exist, but we do not know enough to speculate either way.
\end{remark}

\begin{remark}
The main result of this note makes the following problem very natural: give a groupoid theoretic characterisation of when the maximal and reduced groupoid $C^*$-algebras are equal.
\end{remark}

\begin{remark}
A metric space $X$ has \emph{bounded geometry} if for each $r>0$ there is a uniform bound on the cardinality of all $r$-balls, and is \emph{uniformly discrete} if there is an absolute lower bound on the distance between any two points.  Associated to such an $X$ are maximal and reduced \emph{uniform Roe algebras} $C^*_{u,\max}(X)$ and $C^*_{u,\red}(X)$.  In coarse geometry, the natural analogue of the question addressed in this note is: is it possible that $C^*_{u,\max}(X)=C^*_{u,\red}(X)$ for some space $X$ without Yu's property A \cite[Section 2]{Yu:200ve}?  Indeed, Skandalis, Tu, and Yu \cite{Skandalis:2002ng} have associated to such $X$ a coarse groupoid $G(X)$ which is topologically amenable if and only if $X$ has property A \cite[Theorem 5.3]{Skandalis:2002ng}.  Moreover, the maximal and reduced uniform Roe algebras of $X$ identify naturally with the maximal and reduced groupoid $C^*$-algebras of $G(X)$.  

This note grew out of an attempt to understand this question for the specific example looked at in \cite[Example 1.15]{Willett:2010ca}.  We were not, however, able to make any progress on the coarse geometric special case of the general groupoid question.   
\end{remark}

\begin{remark}
A groupoid is \emph{principal} if the only elements with the same source and range are the units.  It is natural to ask if examples with the sort of properties in Theorem \ref{main} are possible for principal groupoids: the coarse groupoids mentioned above are a special case, as are many other interesting examples coming from equivalence relations and free actions of groups.  Again, our results seem to shed no light on this question, and we do not know enough to speculate either way.
\end{remark}

\bibliographystyle{abbrv}

\bibliography{Generalbib}

\end{document}